\documentclass{article}

\usepackage{latexsym,mathrsfs,bm}
\addtolength{\textwidth}{3 truecm}
\addtolength{\textheight}{1 truecm}
\setlength{\voffset}{-0.6 truecm}
\setlength{\hoffset}{-1.3 truecm}

\usepackage{graphicx}
\usepackage{subcaption}
\usepackage[svgnames]{xcolor}
\usepackage{mathtools}
\usepackage{xifthen}

\usepackage[english]{babel}
\usepackage{paralist}
\usepackage{amsthm,amsfonts,amssymb,amsmath}
\usepackage[latin1]{inputenc}
\newcommand{\eps}{\varepsilon}
\newcommand{\R}{\mathbb{R}}
\newcommand{\Q}{\mathbb{Q}}

\newcommand{\N}{\mathbb{N}}

\newcommand{\Z}{\mathbb{Z}}

\newcommand{\es}[1]{\begin{equation}\begin{split}#1\end{split}\end{equation}}
\newcommand{\est}[1]{\begin{equation*}\begin{split}#1\end{split}\end{equation*}}

\renewcommand{\mod}[1]{~\pr{\textnormal{mod}~#1}}
\newtheorem*{theo*}{Theorem}
\newtheorem{theo}{Theorem}

\newtheorem{lemma}{Lemma}
\newtheorem{corol}[lemma]{Corollary}

\newtheorem*{rem*}{Remark}

\newcommand{\pr}[1]{\left( #1\right)}
\newcommand{\pg}[1]{\left\{ #1\right\}}
\newcommand{\pmd}[1]{\left| #1\right|}

\let\originalleft\left
\let\originalright\right
\renewcommand{\left}{\mathopen{}\mathclose\bgroup\originalleft}
\renewcommand{\right}{\aftergroup\egroup\originalright}
\newcommand{\comment}[1]{}
\makeatletter
\newcommand{\subjclass}[2][2010]{%
  \let\@oldtitle\@title%
  \gdef\@title{\@oldtitle\footnotetext{#1 \emph{Mathematics subject classification.} #2}}%
}
\newcommand{\keywords}[1]{%
  \let\@@oldtitle\@title%
  \gdef\@title{\@@oldtitle\footnotetext{\emph{Key words and phrases.} #1.}}%
}
\makeatother
\newcommand{\addresses}{{
  \bigskip
  \footnotesize

  S.~Bettin, \textsc{Dipartimento di Matematica, Universit\`a di Genova; via Dodecaneso 35; 16146 Genova; Italy. 
}\par\nopagebreak
  \textit{E-mail address}: \texttt{bettin@dima.unige.it}

 }}

\numberwithin{equation}{section}
\begin{document}

\title{A congruence sum and rational approximations}
\author {Sandro Bettin}

\subjclass[2010]{11A55 (primary), 11A07, 11A15, 11N37, 11J70 (secondary)}
\keywords{Congruence sum; rational approximation, continued fractions}

\maketitle

\begin{abstract}
We give a reciprocity formula for a two-variable sum where the variables satisfy a linear congruence condition. We also prove that such sum is a measure of how well a rational is approximable from below and show that the reciprocity formula is a simple consequence of this fact.
\end{abstract}

\section{A congruence sum}
Sums over the integers subject to some congruence conditions are ubiquitous in number theory and often appear in several other areas of mathematics. They are related to many classical arithmetic problems (e.g. that of primes in arithmetic progression, the Dirichlet's divisor problem, etc.) and even when they seem particularly innocuous there is actually a lot of arithmetic information hidden in them. This is the case of the sum
\est{
S\pr{a/q}
:=\sum_{\substack{am\equiv n\mod q,\\mn< q,\ n,m>0}}1,
}
where  $a,q\in\Z$, $q>0$, and $(a,q)=1$, which counts the number of points $(m,n)$ in $(\Z/q\Z)^2$ which belong to the line $am\equiv n\mod q$ and are contained in the hyperbolic region $mn< q$ (here of course we mean the representatives with $0<m,n\leq q$). Notice that $S$ can be interpreted as a $1$-periodic function defined over the rational numbers.

Two ``visual'' examples of such function are given in Figure~1 below.
\begin{figure}[!!ht]
\centering 
\includegraphics[scale=0.45]{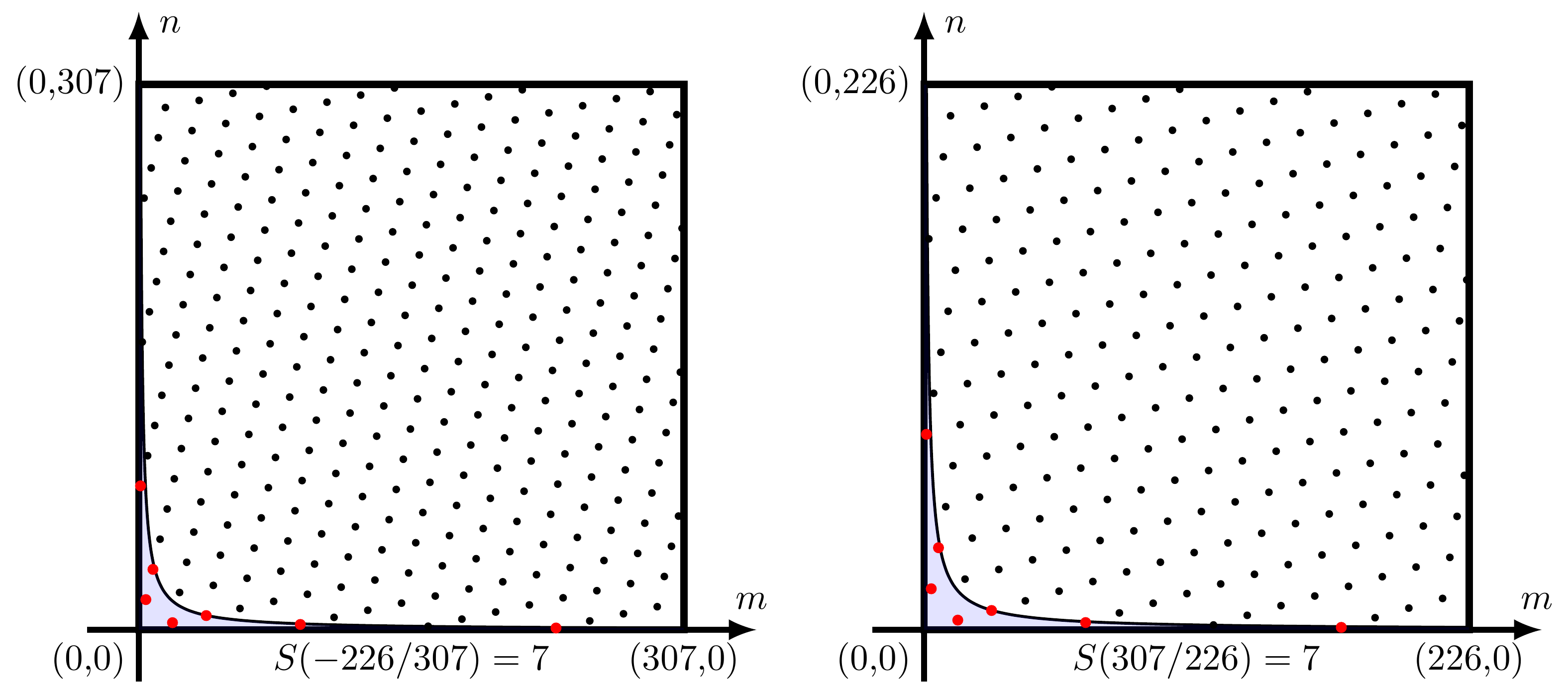}
\caption{A visual computation of $S(a/q)$ in the case $(a,q)=(-226,307)$ and $(a,q)=(307,226)$.
The points represent the couples $(m,n)\in\{0<r<q\}^2$ satisfying $am\equiv n\mod q$, the red ones being the ones below the hyperbola $mn=q$.}
\end{figure}
Notice that in both cases $S(-226,307)=S(307,226)=7$ and the two graphs look overall very similar. This is actually not a coincidence: indeed one always have that $S(-a,q)$ is close to $S(q,a)$ (for $q=307$ their difference is always $\leq3$ and is typically either $0$ or $1$, whereas the maximum value of $S(a,307)$ is $17$). A similar relation holds for $S(a,q)$ and $S(-q,a)$, with the difference that in this case one also has a main term. These two facts are expressed in the following theorem.
\begin{theo}\label{fh}
Let $1\leq a<q$. Then,
\begin{align}
S(a/q)-S(-q/a)&=\sqrt{q/a}+E_{+}(a,q),\label{fp}\\
S(-a/q)-S(q/a)&=E_{-}(a,q),\label{sp}
\end{align}
with $|E_{\pm}(a,q)|\leq \frac32 k_{a/q}+3$ where $k_{a/q}$ is the number of steps in the Euclid division $q/a$ (in particular $E_{\pm}(a,q)\ll \log(2+q)$).
\end{theo}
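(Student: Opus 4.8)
\emph{Plan of proof.}
The plan is to interpret $S$ as a counting function for one--sided rational approximations and then compare the four quantities through the Euclidean algorithm on $q/a$. Since $am\not\equiv 0\pmod q$ for $1\le m<q$, the defining condition unwinds to $S(a/q)=\#\{1\le m<q:\ \{ma/q\}<1/m\}$, and likewise $S(-a/q)=\#\{1\le m<q:\ \{ma/q\}>1-1/m\}$, with $S(\pm q/a)$ the analogous sums modulo $a$. Writing $n=(am\bmod q)$ and $k=\lfloor am/q\rfloor$, the inequality $m\{ma/q\}<1$ is exactly $0<a/q-k/m<1/m^{2}$, so $S(a/q)$ counts the fractions $k/m$ with $1\le m<q$ that approximate $a/q$ strictly from below with error $<1/m^{2}$. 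More uniformly, all four sums count integer triples on a fixed plane: $S(a/q)=\#\{(m,k,n):\ am-qk=n,\ m,n\ge1,\ k\ge0,\ mn<q\}$ and $S(-q/a)=\#\{(m,k,n):\ am-qk=n,\ m,n,k\ge1,\ nk<a\}$, while on the plane $am+n=qk$ one has $S(-a/q)=\#\{(m,k,n):\ am+n=qk,\ m,n,k\ge1,\ mn<q\}$ and $S(q/a)=\#\{(m,k,n):\ am+n=qk,\ m,n,k\ge1,\ nk<a\}$. The degenerate solutions with $k=0$ occur only for $S(a/q)$: they are the pairs $(m,am)$ with $am^{2}<q$, exactly $\lceil\sqrt{q/a}\,\rceil-1=\sqrt{q/a}+O(1)$ of them, and this is the source of the main term in \eqref{fp}.

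The easy half of each identity is a one--line inclusion. On the plane $am-qk=n$ with $k\ge1$ one has $a/k-q/m=n/(mk)>0$, so $mn<q$ forces $nk<a$; hence the $k\ge1$ part of $S(a/q)$ embeds into $S(-q/a)$, and consequently $S(a/q)-S(-q/a)=\bigl(\sqrt{q/a}+O(1)\bigr)-N$, where $N\ge0$ counts the triples with $am-qk=n$, $m,n,k\ge1$, $nk<a$ but $mn\ge q$. Similarly, on the plane $am+n=qk$ a triple with $k\ge1$ and $nk<a$ satisfies $mn/q<nk/a<1$, so $S(q/a)$ embeds into $S(-a/q)$ and $S(-a/q)-S(q/a)=N'\ge0$ for the analogous ``defect'' count $N'$ with $mn<q$ and $nk\ge a$; there is no main term here, since $n\ge1$ already forces $k\ge1$ on this plane. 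So everything reduces to bounding the nonnegative defect counts $N,N'$ by $\tfrac32 k_{a/q}+3$.

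To bound $N$ I pass to continued fractions. A triple counted by $N$ has $0<n<a$ (from $nk<a$), hence is determined by $k$: $n=a-(qk\bmod a)$, $m=(qk+n)/a$, and the requirement $n<a/k$ is exactly $\{qk/a\}>1-1/k$, i.e. $k$ is a denominator of a good lower approximation to $(a-(q\bmod a))/a\equiv -q/a$. Writing $r=q\bmod a$, such $k$ are of two kinds: (i) those with $\{(a-r)k/a\}<1/k$ and $\lfloor(a-r)k/a\rfloor=0$, for which $n=(a-r)k$ and the extra constraint $mn\ge q$ forces $k^{2}$ into an interval of length $<1$, so at most one contributes; and (ii) those that are denominators of convergents or intermediate fractions of $(a-r)/a$, equivalently, up to a single shift of index, of $a/q=[0;a_1,\dots,a_\ell]$ with $\ell=k_{a/q}$. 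For (ii) I bound the total number of ``good'' intermediate fractions of a rational $x=[0;b_1,\dots,b_\lambda]$: with $D_j=\pmd{q_j x-p_j}$, the fraction $\tfrac{p_{2i}+tp_{2i+1}}{q_{2i}+tq_{2i+1}}$ $(1\le t\le b_{2i+2})$ is good precisely when $(q_{2i}+tq_{2i+1})(D_{2i}-tD_{2i+1})<1$; this left--hand side is a downward parabola in $t$ whose maximum equals $\tfrac1{4q_{2i+1}D_{2i+1}}\asymp b_{2i+2}$, so the good $t$ form a short prefix and suffix of $\{1,\dots,b_{2i+2}\}$ — only $O(1)$ of them per pair of consecutive same--side convergents, the large partial quotients costing essentially nothing. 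Summing over the $\le\lceil\lambda/2\rceil$ such pairs gives $O(\lambda)$ good intermediate fractions, and careful bookkeeping sharpens this to $\le\tfrac32\lambda+O(1)$; since $\lambda\le\ell=k_{a/q}$ we obtain $N\le\tfrac32 k_{a/q}+O(1)$, and then $\tfrac32 k_{a/q}+3$ after tracking constants. (If $a\le\sqrt q$ then $N=0$ at once, since $mn\ge q$ forces $n\ge\sqrt q$, impossible as $n\le a-1<\sqrt q$.) The defect $N'$ for \eqref{sp} is handled identically — in fact slightly more easily, as its degenerate family turns out to be empty.

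The main obstacle is this last continued--fraction bookkeeping: establishing cleanly that between two consecutive same--side convergents only boundedly many intermediate fractions are good, with the correct constant, and pairing up the convergents of $a/q$ with those of $q/a$ (their continued fractions being shifts of each other) so that the two sums can genuinely be compared term by term. Everything else — the reformulation, the two embeddings, and the identification of the main term with the ``approximation by $0$'' family — is elementary.
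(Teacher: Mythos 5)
Your route is genuinely different from the paper's. The paper first proves the exact formula $S(a/q)=\sum_j[\sqrt{b_{2j+1}}]+E$ with $0\le E\le\frac32k+1$ (its Theorem~2, via the classification of good one-sided approximations into even convergents, their integer multiples, and at most two semi-convergents per step), and then deduces Theorem~1 by writing the continued fraction of $-q/a$ explicitly in terms of that of $a/q$ and comparing the two formulas; the main term $\sqrt{q/a}$ appears there as $[\sqrt{b_1}]$. You instead compare $S(a/q)$ and $S(-q/a)$ directly as lattice-point counts on the plane $am-qk=n$, get the two inclusions in one line, identify the main term with the degenerate $k=0$ family, and reduce everything to bounding the nonnegative defects $N,N'$. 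This is an attractive reorganization: the inequalities $S(a/q)-S(-q/a)\le\sqrt{q/a}+O(1)$ and $S(-a/q)\ge S(q/a)$ come for free, and the continued-fraction input is isolated in the defect bounds. Your parabola argument for semi-convergents is a correct variant of the paper's Lemma~4.

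The gap is in the bound for $N$. Your dichotomy for the $k$'s counted by $N$ --- (i) the family with $\lfloor(a-r)k/a\rfloor=0$, (ii) ``denominators of convergents or intermediate fractions'' --- is not exhaustive. The solutions of $0<x-c/d<1/d^2$ include, in non-reduced form, all integer multiples $gc'/gd'$ of a reduced good approximation $c'/d'$ with $g^2\varepsilon_{c'/d'}<1$, i.e.\ up to about $\sqrt{b}$ of them per relevant convergent; these are precisely what makes $S(-q/a)$ of size $\sum_j\sqrt{b'_{2j+1}}$ rather than $O(k_{a/q})$, and their denominators $gd'$ are in general not denominators of convergents or intermediate fractions. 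Your family (i) covers only the multiples of the single reduced fraction $\lceil q/a\rceil/1$. As written, then, you bound only the reduced part of $N$, and $N$ could a priori still be as large as $S(-q/a)$ itself, which would destroy the theorem. The fix is to run your family-(i) computation for every reduced fraction: writing $m=gc'$, $k=gd'$, $n=gn'$ with $n'=ac'-qd'>0$, the constraints $nk<a$ and $mn\ge q$ confine $g^2$ to the interval $[\,q/(c'n'),\,a/(n'd')\,)$, whose length is $(ac'-qd')/(c'd'n')=1/(c'd')\le1$, so at most one multiple of each reduced good approximation lands in $N$; combined with the parabola bound on the number of reduced good approximations from one side, this closes the gap. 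The remaining issue --- assembling the contributions (convergents on the relevant side, at most two semi-convergents per step, at most one surviving multiple of each) into the precise constant $\frac32 k_{a/q}+3$ --- you explicitly defer, and it still needs to be done.
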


The $\leq$ part of these inequalities was already obtained by Young~\cite{You} in his extension of Conrey's reciprocity formula for the twisted second moment of Dirichlet $L$-functions~\cite{Con}. Young also obtained a similar version of these formulas in the case when the sharp cut-off $mn\leq q$ is replaced by a smooth one, that is by inserting a factor of $f(mn/q)$ where $f(x)$ is a smooth function going to zero faster than any polynomial at $+\infty$ (see also~\cite{Bet15} for an alternative treatment of the smoothed case). We remark that the same method used to prove Theorem 1 can be used to obtain a simpler proof of the Conrey-Young reciprocity formula for the twisted second moment.

Theorem~\ref{fh} suggests that one can obtain a formula for $S(a/q)$ in terms of the coefficients of the continued fraction expansion $[0;b_1,\cdots,b_k]$ of $a/q$. Indeed one can repeatedly alternate the use of one among~\eqref{fp} or~\eqref{sp} and the reduction modulo the denominator and obtain
\est{
S(a/q)=\sum_{j=0}^{[(k-1)/2]}\sqrt{b_{2j+1}}+O(k_{a/q}^2).
}
However, in fact one can prove directly a stronger form of this result, with $O(k_{a/q}^2)$ replaced by $O(k_{a/q})$, and then deduce Theorem~\ref{fh} from it. 
\begin{theo}\label{mct}
Let $a,q\in\Z$ with $q>0$, $a\neq 0$ and $(a,q)=1$. Let $[0;b_1,\cdots,b_k]$ be the continued fraction expansion of $\frac aq$. Then
\est{
S(a/q)&=\sum_{j=0}^{[(k-1)/2]}[\sqrt{b_{2j+1}}]+E,
}
where $0\leq E\leq \frac32 k+1\ll \log(2+ q)$ and $[x]$ denotes the integer part of $x$.
\end{theo}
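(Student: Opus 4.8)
The plan is first to reinterpret $S$ as a counting function for one‑sided rational approximations, and then to read the answer off the continued fraction.

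\smallskip
\noindent\emph{Step 1: $S$ counts approximations from below.} For $m\in\{1,\dots,q-1\}$ let $n_m\in\{1,\dots,q-1\}$ be the residue of $am$ modulo $q$ (nonzero since $(a,q)=1$); then $n_m=q\{am/q\}$, so the condition $mn_m<q$ in the definition of $S$ is equivalent to $\{am/q\}<1/m$, i.e.\ to $0<\frac aq-\frac pm<\frac1{m^2}$ with $p=[am/q]$. Conversely, if $m\ge1$ and $0<\frac aq-\frac pm<\frac1{m^2}$, then writing $\frac pm=\frac rs$ in lowest terms one has $\frac aq-\frac rs=\frac{as-rq}{qs}\ge\frac1{qs}$, which forces $\frac1{s^2}>\frac1{qs}$, hence $s\le q-1$, and $m=q$ is excluded in the same way; so $m\le q-1$ automatically. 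Therefore
\[
S(a/q)=\#\Bigl\{(p,m)\in\Z\times\Z_{\ge1}:\ 0<\tfrac aq-\tfrac pm<\tfrac1{m^2}\Bigr\},
\]
which is exactly the claim that $S(a/q)$ measures how well $a/q$ is approximable from below; it is from this identity (which also exhibits the symmetry $S(a/q)=S(a^{-1}/q)$) that Theorem~\ref{fh} is to be deduced.

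\smallskip
\noindent\emph{Step 2: reduce to semiconvergents.} Stratify by common factor: writing $(p,m)=d(r,s)$ with $(r,s)=1$ (and $(r,s)=(0,1)$ when $p=0$), the pair $(r/s,d)$ runs bijectively over pairs with $r/s$ a coprime approximation from below of $\alpha:=a/q$ satisfying $\alpha-\frac rs<\frac1{s^2}$ and $1\le d<\bigl(s\sqrt{\alpha-r/s}\,\bigr)^{-1}$, so
\[
S(a/q)=\sum_{r/s}\Bigl(\bigl\lceil\tfrac{1}{s\sqrt{\alpha-r/s}}\bigr\rceil-1\Bigr).
\]
A Farey–neighbour argument identifies the $r/s$ occurring: the unique $u/v$ with $0\le v<s$ and $su-rv=1$ must satisfy $\frac rs<\alpha<\frac uv$ (else $\alpha-\frac rs\ge\frac uv-\frac rs=\frac1{vs}>\frac1{s^2}$), so $r/s$ is a \emph{lower semiconvergent} of $\alpha$. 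Write $[0;b_1,\dots,b_k]$ for $\alpha$ with the normalisation $b_k=1$ — so $k\ge2$ for $0<\frac aq<1$; this normalisation is precisely what resolves the otherwise ambiguous case $a=1$ with $q$ a perfect square — and let $p_j/q_j$ be the convergents, $\delta_j=q_j\alpha-p_j$, $\alpha^*_j=[b_j;b_{j+1},\dots,b_k]$. The even convergents $p_{2i}/q_{2i}$ with $2i<k$ are exactly the convergents $<\alpha$, and every lower semiconvergent has the form $\frac{p_{2i}+tp_{2i+1}}{q_{2i}+tq_{2i+1}}$ for some $i\ge0$, $t\ge0$; hence the sum above runs over such fractions of denominator $\le q-1$ which moreover satisfy the size condition $(q_{2i}+tq_{2i+1})\bigl(|\delta_{2i}|-t|\delta_{2i+1}|\bigr)<1$.

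\smallskip
\noindent\emph{Step 3: the main term comes from the even convergents.} From $|\delta_j|=(q_j\alpha^*_{j+1}+q_{j-1})^{-1}$ one gets $\alpha-\frac{p_{2i}}{q_{2i}}=\frac{|\delta_{2i}|}{q_{2i}}<\frac1{q_{2i}^2}$ for $2i<k$, so every even convergent with $2i<k$ occurs, contributing
\[
\bigl\lceil(q_{2i}|\delta_{2i}|)^{-1/2}\bigr\rceil-1=\Bigl\lceil\sqrt{\alpha^*_{2i+1}+q_{2i-1}/q_{2i}}\Bigr\rceil-1 .
\]
Because $\alpha^*_{2i+1}+q_{2i-1}/q_{2i}\in(b_{2i+1},\,b_{2i+1}+2)$ under the normalisation $b_k=1$, this lies in $\{[\sqrt{b_{2i+1}}],[\sqrt{b_{2i+1}}]+1\}$, and in particular it is $\ge[\sqrt{b_{2i+1}}]$; one checks separately that the a priori cut $d\le(q-1)/q_{2i}$ never drops it below $[\sqrt{b_{2i+1}}]$ (using $q\ge q_{2i+1}\ge b_{2i+1}q_{2i}$). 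As $2i$ runs over the even integers $<k$, the index $i$ runs over $0,1,\dots,[(k-1)/2]$ — exactly the range in the statement — so the even convergents alone produce $\sum_j[\sqrt{b_{2j+1}}]$ and already give $E\ge0$.

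\smallskip
\noindent\emph{Step 4: the remainder is $O(k)$.} For a proper semiconvergent ($1\le t\le b_{2i+2}-1$), substituting $u=\alpha^*_{2i+2}-t$ and $x=q_{2i}/(q_{2i+1}\alpha^*_{2i+2})\in(0,1)$ gives $(q_{2i}+tq_{2i+1})(|\delta_{2i}|-t|\delta_{2i+1}|)=\tfrac{1}{1+x}\,u\bigl(1+x-u/\alpha^*_{2i+2}\bigr)$; here $u\ge1$ and this quantity is $\ge\tfrac12$, so any proper semiconvergent lying in our sum contributes at most $\lceil\sqrt{2(1+x)}\rceil-1=1$, and the quadratic inequality $u(1+x-u/\alpha^*_{2i+2})<1+x$ is satisfied by only $O(1)$ admissible $t$ per block (at most two, both near $t=b_{2i+2}-1$, unless $b_{2i+2}\le3$, when there are at most two values of $t$ in all). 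Summing these $O(1)$ per‑block bounds over the at most $[(k-1)/2]+1$ blocks, together with Step~3, yields $0\le E\le\tfrac32k+1$. The genuinely delicate part — the one I expect to require real care — is exactly this last bookkeeping: making the $O(1)$'s precise enough to reach the stated constant and to preserve $E\ge0$, and handling the two end blocks of the continued fraction (where the $b_k=1$ normalisation is again what keeps things in line). Everything before that — the reduction to approximation from below, the semiconvergent description, and the qualitative estimate $E\ll\log(2+q)$ — is routine once this structure is in place.
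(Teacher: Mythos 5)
Your argument is correct and is essentially the paper's: your Step 1 is Theorem~\ref{revat} (proved there by the same computation $n=q\{am/q\}$), your Step 2 is the weighted identity~\eqref{fasd}, and your Steps 3--4 reproduce the content of Lemma~\ref{mctl} (each even convergent contributes $[\sqrt{b_{2i+1}}]$ or $[\sqrt{b_{2i+1}}]+1$; every other admissible fraction is a semiconvergent with at most two admissible values of $t$ per block, each weighing at most $1$ because $\eps_{c/d}\geq\tfrac12$ off the convergents). The one genuine divergence is how you reach the classification: you use the Farey-neighbour/Stern--Brocot argument ($r/s<\alpha<u/v$ with $su-rv=1$, $v<s$, forces $r/s$ to be a lower semiconvergent), whereas the paper shows such $c/d$ are best approximations from below and adapts Khinchin's Theorem~15; the two are interchangeable, and your quadratic inequality in $u$ is the same computation as the paper's $(b_{j+2}-g)g<b_{j+2}$. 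Two remarks on the points you flag as delicate. First, your replacement of the weight $[\eps_{c/d}^{-1/2}]$ by $\lceil\eps_{c/d}^{-1/2}\rceil-1$, together with the normalisation $b_k=1$, is not pedantry: for $a/q=1/4=[0;4]$ one has $S(1/4)=1$ while $[\sqrt{b_1}]=2$, so under the standard normalisation the bound $E\geq0$ would fail; your conventions repair exactly this boundary case, which the paper's write-up elides. Second, the bookkeeping you defer does close with your conventions: rounding costs at most $1$ per even convergent (at most $[(k+1)/2]$ of them), and proper semiconvergents require $2i+2\leq k-1$ (the last block has $b_k=1$, hence an empty range $1\leq t\leq b_k-1$), so there are at most $2[(k-1)/2]$ of them, each contributing at most $1$; the total excess is at most $\tfrac32 k$, within the stated $\tfrac32 k+1$, and the lower bound $E\geq0$ follows from your observation that each even convergent contributes at least $[\sqrt{b_{2i+1}}]$.
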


Theorem~\ref{mct} doesn't provide a useful asymptotic formula for most values of $a$. Indeed, a standard (but somewhat lengthy) computation with Dirichlet's hyperbola method shows that the average value of $S(a/q)$ is
\est{
\frac1{\varphi(q)}\sum_{\substack{0<a<q,\\ (a,q)=1}}S(a/q)&=\frac{\varphi(q)\sigma_{-1}(q)}{q}\pr{\log q+O\pr{\log\log q}}\asymp \log q,\\
}
as $q\rightarrow\infty$, where $\sigma_{-1}(q)=\sum_{d|q}d^{-1}$ and $\varphi$ is Euler's totient function.\footnote{Notice that if $q$ is prime, then the average of $S(a/q)$ reduces exactly to the Dirichlet's divisor problem: $\frac1{\varphi(q)}\sum_{\substack{0<a<q,\\ (a,q)=1}}S(a/q)=\sum_{n<q}d(n)$, where $d(n)$ is the number of divisors of $n$.}
However, Theorem~\ref{mct} is still useful as it determines exactly all the large values of $S(a/q)$. Indeed, the Theorem shows that $S(a/q)$ is ``large'' if and only if the continued fraction expansion of $a/q$ has a ``large'' odd coefficient. 

Theorem~\ref{mct} can also be used to prove some density results for $S(a/q)$, following the method introduced by Hickerson~\cite{Hic} for the Dedekind sum. For example, we can prove the following.
\begin{corol}\label{2c}
Let $\kappa>1$. Then the set $\pg{\pr{\frac aq,\frac{S(a/q)}{\log^\kappa(2+q)} }\mid \frac aq\in \Q}$ is dense in $\R\times \R_{>0}$.
\end{corol}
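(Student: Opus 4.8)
The plan is to follow Hickerson's method for the Dedekind sum~\cite{Hic}: appending partial quotients to a continued fraction changes its value by a negligible amount, while Theorem~\ref{mct} reads off $S(a/q)$ from the odd-indexed partial quotients, so there is enough freedom to hit any prescribed value of $S(a/q)/\log^\kappa(2+q)$ near any prescribed rational. Since $S$ is $1$-periodic on $\Q$, it suffices to prove: for every $\alpha\in(0,1)$, $\beta>0$ and $\eps>0$ there is $a/q$ with $0<a<q$, $(a,q)=1$, $|a/q-\alpha|<\eps$ and $|S(a/q)/\log^\kappa(2+q)-\beta|<\eps$; translating by integers then gives density on all of $\R\times\R_{>0}$, points with integral first coordinate being obtained from nearby non-integer ones. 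Fix $\alpha,\beta,\eps$ and pick any rational $p/r=[0;c_1,\dots,c_m]$ with $|p/r-\alpha|<\eps/2$ and $r$ large enough that $1/r^2<\eps/2$; passing if necessary to the representation $[0;c_1,\dots,c_m-1,1]$ (which leaves $p/r$ unchanged) we may assume $m$ is even. Set $C_0:=\sum_{0\le j\le m/2-1}[\sqrt{c_{2j+1}}]$; the quantities $m$, $r$, $C_0$ are now fixed.

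For integer parameters $B\ge 2$ and $s\ge 1$ I would take
\[
\frac aq:=[0;c_1,\dots,c_m,\underbrace{B,1,B,1,\dots,B,1}_{2(s-1)\text{ terms}},B,2],
\]
appending the block $B,1$ exactly $s-1$ times and then $B,2$, for a total of $2s$ new partial quotients, so that the expansion ends in $2$ and is canonical. Three facts then have to be checked. First, the value of a continued fraction stays within $1/q_m^2=1/r^2$ of $p_m/q_m=p/r$ when further partial quotients are appended, so $|a/q-\alpha|<\eps$. Second, from the recursion $q_{m+i}=b_{m+i}q_{m+i-1}+q_{m+i-2}$ one gets $rB^s\ll q\ll r(B+1)^s2^s$, hence $\log(2+q)=\log r+s\log B+O(s)$. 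Third, because $m$ is even the $s$ copies of $B$ sit in odd positions, so the odd-indexed partial quotients of $a/q$ are $c_1,c_3,\dots,c_{m-1}$ (contributing $C_0$) followed by $s$ copies of $B$; Theorem~\ref{mct} then gives $S(a/q)=C_0+s[\sqrt B]+E$ with $0\le E\le\frac32(m+2s)+1=O(m+s)$.

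Finally, given $B$ I would let $s=s(B)$ be the nearest integer to $\bigl(\sqrt B/(\beta(\log B)^\kappa)\bigr)^{1/(\kappa-1)}$; since $\kappa>1$ this is well defined and $s(B)\to\infty$ as $B\to\infty$. Combining the second and third facts and letting $B\to\infty$ with $m,r,C_0$ held fixed (so that $C_0$, $\log r$ and $m$ are lower order compared with $s[\sqrt B]$ and $s\log B$),
\[
\frac{S(a/q)}{\log^\kappa(2+q)}=\frac{C_0+s[\sqrt B]+O(m+s)}{\bigl(\log r+s\log B+O(s)\bigr)^\kappa}=\frac{\sqrt B}{(\log B)^\kappa}\,s^{1-\kappa}\bigl(1+o(1)\bigr)\longrightarrow\beta
\]
by the choice of $s(B)$. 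Hence for $B$ large enough the ratio is within $\eps$ of $\beta$, while $a/q$ is within $\eps$ of $\alpha$, which proves the claim.

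I expect the only delicate point to be the error bookkeeping in the last display, in particular the clean denominator estimate $\log q=\log r+s\log B+O(s)$ for the effect of appending partial quotients, and checking that the $o(1)$ is uniform enough to force convergence once $s=s(B)$ is substituted. This is exactly where the hypothesis $\kappa>1$ is used: it makes $s^{1-\kappa}\to 0$, so that varying $B$ (which governs $\sqrt B/(\log B)^\kappa$) and then $s$ lets us reach every positive $\beta$, not only the large ones.
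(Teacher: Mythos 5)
Your argument is correct, and it follows the same Hickerson-style template as the paper --- reduce to $x\in(0,1)$ by periodicity, fix a short even-length prefix $[0;c_1,\dots,c_m]$ to pin down the first coordinate, append a tail whose odd-indexed partial quotients control $S(a/q)$ via Theorem~\ref{mct}, then tune parameters --- but your tail is genuinely different from the paper's. The paper appends only \emph{two} quotients, taking $\frac aq=[0;b_1,\dots,b_{2r},m,n]$, so the length $k$ stays bounded and the error in Theorem~\ref{mct} is $O(1)$; then $S(a/q)=[\sqrt m]+O(1)$ and $\log(2+q)=\log(mn)+O(1)$, and one sets $n=[e^{m^{1/(2\kappa)}y^{-1/\kappa}}]$ and lets $m\to\infty$. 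You instead append $2s$ quotients with $s\to\infty$, so the $O(k)=O(s)$ error is unbounded and must be absorbed by the main term $s[\sqrt B]$, which is why you also need $B\to\infty$; your choice $s\asymp(\sqrt B/(\beta\log^\kappa B))^{1/(\kappa-1)}$ is exactly where $\kappa>1$ enters (it makes $s^{1-\kappa}\to0$, letting you reach small $\beta$), whereas the hypothesis $\kappa>1$ plays no visible role in the paper's two-quotient version. Your bookkeeping checks out: the quotients $B$ do land in odd positions, $\log q=\log r+s\log B+O(s)$ follows from the recursion, the numerator is $s\sqrt B(1+O(B^{-1/2}))$ since $C_0$ and $m$ are fixed, and the $o(1)$ is uniform once $B\to\infty$ forces $s(B)\to\infty$. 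The only cosmetic point is the parity adjustment: if $c_m=1$ and $m$ is odd you cannot pass to $[0;c_1,\dots,c_m-1,1]$, but then $[0;c_1,\dots,c_{m-1}+1]$ already has even length, so every rational does admit an even-length expansion and the step is harmless.
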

It would be interesting to understand how the set $\{(\frac aq,S(a/q))\mid \frac aq\in \Q\}$ is distributed as $q\rightarrow\infty$ (see~\cite{Var} and~\cite{Bet15} for the computation of the distribution of two somewhat similar sums via the use of reciprocity formulas close to~\eqref{fp} and~\eqref{sp}).

\section{Rational approximations}
Theorem~\ref{fh} and~\ref{mct} actually admit a rather simple explanation, once one realizes that $S(a/q)$ can be defined also in another simple and apparently unrelated way. Indeed, $S(a/q)$ coincides with the number of ways $a/q$ can be ``well approximated'' from below by fractions of smaller denominator. The theory of continued fractions tells us that ``good approximations'' of a real number are obtained by its convergents, which then explains why the coefficients of the continued fraction expansion arise in Theorem~\ref{mct}.

Before stating the precise result, we give an example for the case of $S(-226/307)$ considered in Figure~1. The $7$ points $(m,n)$ appearing in the sum defining $S(-226/307)$ are $(235,1)$, $(91,3)$, $(19,4)$, $(38,8)$, $(4,17)$, $(8,34)$, $(1,81)$. Now, compare them with the fractions $r/s$ with $s<307$ which satisfy $0<-\frac{226}{307}-\frac rs<\frac{1}{s^2}$. These are the even convergents $-\frac11,-\frac{3}{4}$, $-\frac{14}{19}$, the ``double'' of the last two $-\frac{6}{8}$, $-\frac{28}{38}$, and two of the semi-convergents $-\frac{67}{91}$ and $-\frac{173}{235}$. Notice that the denominators of such fractions are precisely the $m$-coordinates of the $7$ points! The following simple theorem tells us that this is no coincidence.
\begin{theo}\label{revat}
Let $a,q\in\Z$, with $q>0$ and $(a,q)=1$. Then,
\es{\label{reva}
S(a/q)=\#\{(c,d)\in\Z^2\mid d>0,\ 0<\tfrac{a}{q}-\tfrac cd< \tfrac 1{d^2}\}.
}
\end{theo}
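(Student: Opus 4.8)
The plan is to set up a bijection between the two sets being counted: the lattice points $(m,n)$ with $0<m,n$, $mn<q$, and $am\equiv n \mod q$ on one side, and the pairs $(c,d)$ with $d>0$ and $0<\tfrac aq-\tfrac cd<\tfrac 1{d^2}$ on the other. The natural correspondence is $m=d$ together with $n = aq\cdot(\tfrac aq - \tfrac cd)$; equivalently, writing the condition $0<\tfrac aq-\tfrac cd<\tfrac1{d^2}$ in the form $0<ad-cq<\tfrac qd$, one sets $m=d$ and $n=ad-cq$. Then $n$ is a positive integer, the relation $n=ad-cq$ forces $n\equiv ad = am\mod q$, and the upper bound $n<q/d=q/m$ is exactly $mn<q$. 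So given $(c,d)$ the resulting $(m,n)$ lies in the sum defining $S(a/q)$.

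Conversely, given $(m,n)$ counted by $S(a/q)$, I would recover $(c,d)$ by putting $d=m$ and solving $n=am-cq$ for $c$, i.e. $c=(am-n)/q$; this is an integer precisely because $am\equiv n\mod q$. One then checks $0<\tfrac aq-\tfrac cd = \tfrac{am-cq}{qm}\cdot\tfrac1{?}$—more precisely $\tfrac aq-\tfrac cm = \tfrac{am-cq}{qm} = \tfrac n{qm}$, which is positive since $n>0$, and is $<\tfrac1{m^2}=\tfrac1{d^2}$ exactly because $mn<q$. Thus the two maps are mutually inverse and the two sets have the same cardinality.

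The one genuine subtlety—and the main point to be careful about—is the coprimality/representative bookkeeping. In the definition of $S(a/q)$ one takes the representatives with $0<m,n\le q$, and the line $am\equiv n\mod q$ carries exactly the right number of such points; on the approximation side there is no a priori gcd condition imposed on $(c,d)$, and no upper bound on $d$ beyond what $0<\tfrac aq-\tfrac cd<\tfrac1{d^2}$ forces. I would verify that the map $(c,d)\mapsto(m,n)=(d,\,ad-cq)$ automatically lands in the range $0<m,n\le q$: indeed $n=ad-cq>0$ and $n<q/d\le q$ give $n\le q$, while $m=d$ satisfies $d\le q$ because $n\ge1$ forces $d< q/n\le q$ (using $q>0$, $a/q-c/d>0$). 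One should also note that distinct pairs $(c,d)$ give distinct $(m,n)$ even without assuming $(c,d)=1$, since $d$ is determined by $m$ and then $c$ by $(m,n)$; and no cancellation issue arises because $q>0$. Assembling these checks gives~\eqref{reva}.

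Finally, I would remark that this reformulation makes Theorems~\ref{fh} and~\ref{mct} transparent: the pairs $(c,d)$ with $0<\tfrac aq-\tfrac cd<\tfrac1{d^2}$ are, by the classical theory of continued fractions, exactly the even convergents of $a/q$, certain of their multiples, and certain semiconvergents lying just below $a/q$, which is where the $[\sqrt{b_{2j+1}}]$ terms and the $O(k)$ error come from. I do not expect this deduction to be needed for the proof of Theorem~\ref{revat} itself, but it is the reason the identity is worth isolating.
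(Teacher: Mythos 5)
Your proof is correct and follows essentially the same route as the paper's: both rest on the explicit correspondence $m=d$, $n=ad-cq=qd\left(\tfrac aq-\tfrac cd\right)$, with the paper obtaining $c$ as the largest integer $r$ with $\tfrac rm<\tfrac aq$ while you recover it as $c=(am-n)/q$, which is the same integer. (Your first formula $n=aq\cdot(\tfrac aq-\tfrac cd)$ is a slip for $qd\cdot(\tfrac aq-\tfrac cd)$, but you immediately replace it by the correct $n=ad-cq$, so nothing is affected.)
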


\begin{proof}[Proof of Theorem~\ref{revat}]
Let $0<n,m<q$ with $am\equiv n\mod q$. Let $r$ be the largest integer such that $\frac{r}{m}<\frac aq$; clearly $0<\frac aq-\frac{r}{m}<\frac 1m$. Also, 
\es{\label{ade}
am=q\frac{a}qm= q\pr{\frac{r}{m}+\frac aq-\frac{r}{m}}m\equiv q\pr{\frac aq-\frac{r}{m}}m\mod{q}.
}
Thus, since $0<q(\frac aq-\frac{r}{m})m<q$, we must have $q(\frac aq-\frac{r}{m})m=n$. We then have that $mn< q$ if and only if $q(\frac aq-\frac{r}{m}) m^2< q$ and thus if and only if $(\frac aq-\frac{r}{m})< \frac 1{m^2}$. Thus, we obtain the $\leq$ side of~\eqref{ade}; it is clear that repeating the same argument in the opposite direction yields the other inequality and so the proof is completed.
\end{proof}

Notice that the set on the right hand side of~\eqref{reva} doesn't have the condition $(c,d)=1$, that is we are not requiring the fractions $\frac cd$ to be in reduced form. If one wants to count only reduced fractions, then one immediately sees that $S(a/q)$ corresponds to a weighted sum, where the weight takes into account how good the approximation is:
\es{\label{fasd}
S(a/q)=\sum_{\substack{(c,d)=1,\, d>0\\0<\frac{a}{q}-\frac cd\leq \frac 1{d^2}}}[\eps_{c/d}^{-\frac12}],
}
where $\eps_{c/d}=d^2|{\frac{a}{q}-\frac cd}|$. The theory of continued fractions then helps us recover Theorem~\ref{mct} from~\eqref{fasd} and Theorem~\ref{fh} will then follow easily.

Before proving the various results we make one last remark. If in $S(a/q)$ we add the condition $(m,n)=1$ and include also the solutions of $am\equiv -n\mod q$, then we have the following analogue of Theorem~\ref{revat}:
\es{\label{ard}
\sum_{\substack{am\equiv \pm n\mod q,\\mn< q,\ n,m>0,\\(m,n)=1}}1=\#\{\tfrac cd\in\Q\mid,\ (c,d)=1,\  0\neq \pmd{\tfrac{a}{q}-\tfrac cd}< \tfrac 1{d^2}\}
}
valid for $q$ prime (if $q$ is not prime one has to add the condition $(q,d)=1$ in the set on the right hand side).
Now, using M\"obius inversion formula and the asymptotic for Dirichlet's divisor problem, we have that for $q$ prime the average value of the left hand side is
\est{
\frac1{\varphi(q)}\sum_{\substack{a\mod q,\\(a,q)=1}}\sum_{\substack{am\equiv \pm n\mod q,\\mn< q,\ n,m>0,\\(m,n)=1}}1
=\frac{12}{\pi^2}\log q+O(1),
}
as $q\rightarrow\infty$. Also, it is known (cf. the next Section) that all the convergents to $\frac aq$ (different from $\frac aq$) are contained in the set of the right hand side of~\eqref{ard} and that, on average over $a$, there are asymptotically $\frac{12\log 2}{\pi^2}\log q$ convergents of $\frac aq$ as $q$ goes to infinity among primes~\cite{Hei}. In particular, on average over $a$ and as $q\rightarrow\infty$ among primes, we have that $\log 2\approx 69.3\%$ of the solutions to $|\frac{a}{q}-\frac cd|< \frac 1{d^2}$ are partial quotients of $\frac aq$, whereas $\approx 30.7\%$ are not. (In Lemma~\ref{mctl} below we will see that these  other solutions are certain semi-convergents of $a/q$).

We conclude this section by observing that the second moment of $S(a/q)$ is very closely related to the $4$-th moment of Dirchtlet L-functions at the central point. In particular, Theorem 3 opens an alternative approach to this problem via methods of Diophantine approximation (see also~\cite{CK}).

\section{Proofs of Theorems~\ref{mct} and~\ref{fh} and of Corollary~\ref{2c}}

\begin{proof}[Proof of Theorem~\ref{mct}]
It is well known (see~\cite{Khi}, Chapter~1
) that all convergents $h_{j}/k_j$ of $a/q$ (with $\frac {h_j}{k_j}\neq \frac aq$) satisfy
\est{
\frac{1}{k_j(k_{j+1}+k_j)}<\pmd{\frac{h_{j}}{k_j}-\frac aq}<\frac{1}{k_jk_{j+1}}<\frac1{k_{j}^2}
}
and that one has $\frac{h_{j}}{k_j}<\frac aq$ if and only if $j$ is even, so that all the even convergents appear in the sum~\eqref{fasd}. Also, the above inequalities give
\est{
\frac{k_j}{(k_{j+1}+k_j)}<\eps_{h_{j}/k_j}<\frac{k_j}{k_{j+1}}
}
and so, since $k_{j+1}=b_{j+1}k_{j}+k_{j-1}$ and $0\leq k_{j-1}< k_j$ for $j\geq0$, we obtain $b_{j+1}\leq \eps_{h_{j}/k_j}^{-1}\leq b_{j+1}+2$. By the inequality $[\sqrt {x+2}]-[\sqrt x]\leq1$ for $x\geq1$ we then have $[b_{j+1}^\frac12]\leq [\eps_{h_{j}/k_j}^{-\frac12}]\leq [b_{j+1}^\frac12]+1$.

The solutions $c/d$ to $0<|\frac{c}{d}-\frac aq|<\frac1{d^2}$ are not all convergents of $a/q$, however this is not far for being true. Indeed, all solutions $c/d$ with $(c,d)=1$ of the stricter inequality $0<|\frac{c}{d}-\frac aq|<\frac1{2d^2}$ are convergents (see~\cite{Khi}, Theorem~19), so that $|\eps_{c/d}|^{-1}\leq2$ if $c/d$ is not a convergent. In particular,~\eqref{fasd} gives
\est{\label{fasd2}
S(a/q)=\sum_{\substack{j=1,\\ j\text{ odd}}}^k[b_{j}^{\frac12}]+S^*(a/q)+\mathcal E_{a,q}
}
where $0\leq \mathcal E_{a,q}\leq (k+1)/2$ and $S^*(a/q)$ is the number of reduced rationals $c/d$ satisfying $0<\frac{a}{q}-\frac cd<\frac 1{d^2}$ which are not convergents of $a/q$. By Lemma~\ref{mctl} below $S^*(a/q)$ is bounded by twice the number of even convergents of $\frac aq$ (different from $a/q$), so that $S^*(a/q)\leq k$ and the proof of Theorem~\ref{mct} is completed.\end{proof}

\begin{lemma}\label{mctl}
Let $\frac aq\in\Q$ with continued fraction expansion $[b_0;b_1,\dots,b_k]$ and convergents $\frac{h_j}{k_j}$ for $-1\leq j\leq k$. Then, every $\frac cd\in\Q$ which satisfies $0\neq |\frac aq-\frac cd|<\frac1{d^2}$ with $(c,d)=1$ is either a convergent $\frac{h_j}{k_j}$ of $\frac aq$ (with $-1\leq j\leq k-1$) or a semi-convergent of the form $\frac{h_{j}+gh_{j+1}}{k_{j}+gk_{j+1}}$ with $g=1$ or $g=b_{j+2}-1$ (and $-1\leq j\leq k-2$). Moreover, in both cases $c/d< a/q$ if and only if $j$ is even. 
\end{lemma}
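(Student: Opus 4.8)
The plan is to locate an arbitrary such $\frac cd$ inside the fan of convergents and semi-convergents of $\alpha:=\frac aq$ and to exploit that consecutive elements of this fan are Farey neighbours. Write $\frac{h_j}{k_j}$ for the convergents ($-1\le j\le k$), and for $-1\le j\le k-2$, $0\le g\le b_{j+2}$, set $s^{(j)}_g:=\frac{h_j+gh_{j+1}}{k_j+gk_{j+1}}$, with denominator $D^{(j)}_g:=k_j+gk_{j+1}$; these are exactly the semi-convergents, with $s^{(j)}_0=\frac{h_j}{k_j}$, $s^{(j)}_{b_{j+2}}=\frac{h_{j+2}}{k_{j+2}}=s^{(j+2)}_0$, and $s^{(-1)}_g=b_0+\frac1g$. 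The determinant identity $h_{j+1}k_j-h_jk_{j+1}=(-1)^j$ gives both that each $s^{(j)}_g$ is already in lowest terms and that $s^{(j)}_{g+1}-s^{(j)}_g=\frac{(-1)^j}{D^{(j)}_g D^{(j)}_{g+1}}$; in particular, for fixed even $j$ the $s^{(j)}_g$ increase from $\frac{h_j}{k_j}$ up to $\frac{h_{j+2}}{k_{j+2}}\le\alpha$, so listing all even-$j$ semi-convergents in increasing order yields a chain $\frac{h_0}{k_0}=\sigma_0<\sigma_1<\dots<\sigma_N<\sigma_{N+1}:=\alpha$ in which every consecutive pair is a Farey pair (unit determinant), and $\sigma_{N+1}=\frac aq$ has denominator $q$.

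I would carry out the case $\frac cd<\alpha$ in full and note that $\frac cd>\alpha$ is the mirror image, built from the odd-$j$ semi-convergents (these lie above $\alpha$, with $s^{(-1)}_g=b_0+\frac1g$ decreasing to $\frac{h_1}{k_1}$). So assume $\frac cd<\alpha$, $(c,d)=1$, $0\neq|\alpha-\frac cd|<\frac1{d^2}$. Then $\frac cd$ is either (i) $<\frac{h_0}{k_0}=b_0$, or (ii) strictly between consecutive $\sigma_i,\sigma_{i+1}$, or (iii) equal to some $\sigma_i$. Case (i) is impossible: there $\frac cd\le b_0-\frac1d$, so $\alpha-\frac cd>\frac1d\ge\frac1{d^2}$ (for $k=0$ there are no solutions at all). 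Case (ii) is impossible: the Farey property gives $d\ge D+D'>D'$ where $D,D'$ are the denominators of $\sigma_i,\sigma_{i+1}$, while $\frac cd\neq\sigma_{i+1}$ gives $|\frac cd-\sigma_{i+1}|\ge\frac1{dD'}$, whence, using $\frac cd<\sigma_{i+1}\le\alpha$,
\est{
\alpha-\tfrac cd&=(\alpha-\sigma_{i+1})+(\sigma_{i+1}-\tfrac cd)\ge\tfrac1{dD'}>\tfrac1{d^2}.
}
So $\frac cd=s^{(j)}_g$ for some even $j$; if $g\in\{0,b_{j+2}\}$ this is a convergent (of even index, hence $<\alpha$ since $\frac cd\ne\alpha$), so it remains to exclude $2\le g\le b_{j+2}-2$. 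For such $g$ one has $g+2\le b_{j+2}$, hence $s^{(j)}_g<s^{(j)}_{g+1}<s^{(j)}_{g+2}\le\alpha$, and, abbreviating $D_i:=D^{(j)}_i$,
\est{
\alpha-s^{(j)}_g&\ge(s^{(j)}_{g+1}-s^{(j)}_g)+(s^{(j)}_{g+2}-s^{(j)}_{g+1})\\
&=\tfrac1{D_gD_{g+1}}+\tfrac1{D_{g+1}D_{g+2}}=\tfrac{D_g+D_{g+2}}{D_gD_{g+1}D_{g+2}}\ge\tfrac1{D_g^2},
}
the last inequality because $D_g^2\ge(D_{g+1}-D_g)D_{g+2}=k_{j+1}D_{g+2}$, which for $g\ge2$ follows from $D_g=gk_{j+1}+k_j$ since then $D_g^2-k_{j+1}D_{g+2}=(g^2-g-2)k_{j+1}^2+(2g-1)k_{j+1}k_j+k_j^2\ge0$. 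Thus $s^{(j)}_g$ is not a solution, leaving exactly $g=1$ and $g=b_{j+2}-1$, the claimed semi-convergents. Since $\frac cd<\alpha$ forced $j$ even throughout, and since the sign of $s^{(j)}_{g+1}-s^{(j)}_g$ together with the classical sign of $\alpha-\frac{h_j}{k_j}$ pins down on which side of $\alpha$ each convergent and semi-convergent lies, the ``moreover'' statement follows.

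I expect the work to lie in bookkeeping rather than in a single hard idea. One must assemble the chain $\sigma_0<\sigma_1<\dots$ correctly --- handling the seams $s^{(j)}_{b_{j+2}}=s^{(j+2)}_0$, the blocks with $b_{j+2}=1$ (no interior semi-convergent), and the degenerate ends $j=-1$ and $j+2=k$ (where $\frac{h_{j+2}}{k_{j+2}}=\alpha$, so that $\sigma_{N+1}=\alpha$ closes the chain) --- and must check the routine claim that $s^{(j)}_g$ is in lowest terms with denominator $D^{(j)}_g$, so that the Farey bound $d\ge D+D'$ is legitimate. The one genuine computation is the chain of inequalities ruling out $2\le g\le b_{j+2}-2$; it should be verified at the endpoint $g=b_{j+2}-2$ (where $g+2=b_{j+2}$ and $s^{(j)}_{g+2}=\frac{h_{j+2}}{k_{j+2}}$ is still $\le\alpha$) and for $j=-1$ (where $k_j=0$, $D^{(-1)}_g=g$, and the inequality reduces to $g^2\ge g+2$). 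The mirror argument for $\frac cd>\alpha$ is identical with the inequalities reversed and produces the odd-$j$ cases, including the convergent-free block $j=-1$.
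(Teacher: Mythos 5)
Your proof is correct, and it reaches the same two-stage structure as the paper's argument --- first reduce the candidates to convergents and semi-convergents, then eliminate the interior values $2\leq g\leq b_{j+2}-2$ by a quadratic inequality --- but it executes the first stage differently. The paper shows that any solution is a ``best rational approximation from below'' (via the same Farey-type inequality $\frac{1}{dd'}\leq\frac{c'}{d'}-\frac cd<\frac1{d^2}$ you use) and then invokes Khinchin's Theorem~15, remarking that its proof adapts to one-sided best approximations; you instead build the increasing chain of even-$j$ semi-convergents explicitly and use the mediant/unit-determinant property to show directly that any non-member of the chain is too far from $\alpha$. Your route is fully self-contained, at the cost of the bookkeeping you acknowledge (seams $s^{(j)}_{b_{j+2}}=s^{(j+2)}_0$, the appended endpoint $\sigma_{N+1}=\alpha$ when $k$ is odd, lowest-terms of $s^{(j)}_g$); the paper's is shorter but leans on an ``adapted'' citation. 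For the elimination step the paper compares $s^{(j)}_g$ with $\frac{h_{j+2}}{k_{j+2}}$ and reduces to $(b_{j+2}-g)g<b_{j+2}$, whereas you compare with $s^{(j)}_{g+2}$ and check $D_g^2\geq k_{j+1}D_{g+2}$; these are interchangeable computations and both correctly isolate $g\in\{1,b_{j+2}-1\}$. Your inequalities all check out (in particular $(g^2-g-2)k_{j+1}^2+(2g-1)k_{j+1}k_j+k_j^2\geq0$ for $g\geq2$, and the strictness in case (ii) coming from $d\geq D+D'>D'$), so the only remaining work is the routine verification you already flag.
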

\begin{proof}
We assume $\frac aq<\frac cd$, the proof being essentially identical otherwise.
Now, if $0<\frac aq-\frac cd<\frac1{d^2}$ with $(c,d)=1$, then $\frac cd$ is a ``best rational approximation from below'' for $a/q$, that is $\frac cd$ is closer to $\frac aq$ than any rational which is less than $a/q$ and have a smaller denominator. Indeed, if $0<d'<d$ and $0<\frac{a}{q}-\frac {c'}{d'}\leq \frac{a}{q}-\frac {c}{d}$ for some $c'\in\Z$, then
\est{
\frac{1}{dd'}\leq \frac{c'}{d'}-\frac {c}{d}\leq \pr{\frac{a}{q}-\frac {c}{d}}-\pr{\frac{a}{q}-\frac {c'}{d'}}<\frac 1{d^2}
}
which gives a contradiction. Now,  all the best rational approximation from above or below for $\frac aq$ which are not convergents are semi-convergents of the form $\frac{h_{j}+gh_{j+1}}{k_{j}+gk_{j+1}}$ for $g$ an integer satisfying $1\leq g<b_{j+2}$ (Theorem~15 of~\cite{Khi} proves this for best rational approximations, but the proof carries over also for best approximations from below or from above); also this fraction is smaller than $\frac aq$ if and only if $j$ is even. We will now show that among these only the values $g=1$ and $g=b_{j+2}-1$ might be such that the inequality $|\frac{h_{j}+gh_{j+1}}{k_{j}+gk_{j+1}}-\frac aq|<\frac 1{(k_{j}+gk_{j+1})^2}$ is satisfied.

Clearly we can assume $b_{j+2}\geq4$, since otherwise there is nothing to prove.
For $j$ even we have $k_{j+1}h_{j}-k_{j}h_{j+1}=1$ and so
\est{
\frac{h_{j+2}}{k_{j+2}}-\frac{h_{j}+gh_{j+1}}{k_{j}+gk_{j+1}}=\frac{b_{j+2}-g}{(k_{j}+b_{j+2}k_{j+1})(k_{j}+gk_{j+1})}.
}
Since $\frac{h_{j}+gh_{j+1}}{k_{j}+gk_{j+1}}<\frac{h_{j+2}}{k_{j+2}}\leq \frac aq$, then it follows that in order to have $\frac aq-\frac{h_{j}+gh_{j+1}}{k_{j}+gk_{j+1}}<\frac1{(k_{j}+gk_{j+1})^2}$ it is necessary that 
\est{
(b_{j+2}-g)(k_{j}+gk_{j+1})<(k_{j}+b_{j+2}k_{j+1}).
}
and a fortiori one must have $(b_{j+2}-g)g<b_{j+2}$, since $b_{j+2}-g\geq1$. Solving for $g$ we obtain 
\est{
g<\frac{b_{j+2}-\sqrt{b_{j+2}^2-4b_{j+2}}}{2}\qquad \text{or}\qquad g>\frac{b_{j+2}+\sqrt{b_{j+2}^2-4b_{j+2}}}{2}
}
and thus, reminding that $g$ is a positive integer greater than or equal to $2$, the only possibilities are $g=1$ and $g=b_{j+2}-1$, as desired.
\end{proof}

\begin{proof}[Proof of Theorem~\ref{fh}]
A simple computation shows that if $a/q=[b_0;b_1,b_{2},\cdots b_{k}]$ with $b_0\geq0$ and $a\neq0$, then the continued fraction expansion of $-\frac qa$ is
\es{\label{fpp}
-\frac qa=\begin{cases}
      [-b_1-1;1,b_2-1,b_3,b_4,\dots,b_k]& \text{if $b_2=1$},\\
      [-b_1-1;b_3+1,b_4,b_5,\dots,b_k]& \text{if $b_2>1$},
     \end{cases}
}
if $b_0=0$ and $k\geq2$ (if $k=1$ then $-\frac{q}{a}=-b_1$)  and
\es{\label{spp}
-\frac qa=\begin{cases}
      [-1;1,b_0-1,b_1,b_2,\dots,b_k]& \text{if $b_0=1$},\\
      [-1;b_1+1,b_2,b_3\dots,b_k]& \text{if $b_0>1$},
     \end{cases}
}
if $b_0>0$ (if $k=0$ and $b_0=1$ then $-\frac qa=-1$). Equation~\eqref{fp} then follows from~\eqref{fpp} and Theorem~\ref{mct} upon observing that $b_1\leq q/a<b_1+1$ and thus $[\sqrt {b_1}]\leq \sqrt {q/a}<[\sqrt {b_1}]+1$. Equation~\eqref{sp} follows in the same way from~\eqref{spp} exchanging the roles of $a$ and $q$.
\end{proof}

\begin{proof}[Proof of Corollary~\ref{2c}]
Let $(x,y)\in\R\times \R_{>0}$. For every fixed $\eps>0$ we need to find $a/q$ such that $|a/q-x|<\eps$ and $|S(a/q)/\log^\kappa(2+q)-y|<\eps$. Since $S(a/q)$ is 1-periodic, we can assume $0<x<1$. Let $b_1,\cdots,b_{2r}$ be such that $|x-[0;b_1,\cdots b_{2r}]|\leq \eps/2$ for some $r\ll_\eps 1$. For $m,n\in \N$, let $\frac aq=[0;b_1,\cdots b_{2r},m,n]$ so that $q=(dm+d')n+d$ for some $d,d'\ll1$ (considering $b_1,\cdots b_{2r},x,y,r$ as fixed). Thus, by Theorem~\ref{mct}, we have
\est{
\frac{S(a/q)}{\log^\kappa(2+q)}=\frac{\sqrt{m}}{\log^\kappa(2+mn)}+o(1).
}
as $m,n\rightarrow\infty$. We take $n=[e^{m^\frac1{2\kappa}y^{-\frac1\kappa}}]$ and the Corollary follows by taking $m$ large enough.
\end{proof}
\comment
{
\begin{proof}[Proof of~\eqref{aecv}]
 \est{
\frac1{\varphi(q)}\sum_{\substack{0<a<q,\\ (a,q)=1}}S(a/q)&=\frac1{\varphi(q)}\sum_{\substack{0<a<q,\\ (a,q)=1}}\sum_{\substack{am\equiv n\mod q,\\mn< q,\ n,m>0}}1=\frac1{\varphi(q)}\sum_{\substack{d|q, \\d\leq \sqrt q}}d\sum_{\substack{(n,q)=(m,q)=d,\\mn< q}}1\\
&=2\frac1{\varphi(q)}\sum_{\substack{d|q, \\d\leq \sqrt q}}d\sum_{\substack{(n,q)=1,\\n\leq  \sqrt{q/d^2}}}\sum_{\substack{(m,q)=1,\\m< q/nd^2}}1-\frac1{\varphi(q)}\sum_{\substack{d|q, \\d\leq \sqrt q}}d\pr{\sum_{\substack{(n,q)=1,\\n\leq  \sqrt{q/d^2}}}1}^2\\
&=2\frac1{\varphi(q)}\sum_{\substack{d|q, \\d\leq \sqrt q}}d\sum_{\substack{(n,q)=1,\\n\leq \sqrt{q/d^2}}}\pr{\frac{\varphi(q)}{d^2n}+O(d(q))}-\frac1{\varphi(q)}\sum_{\substack{d|q, \\d\leq \sqrt q}}d\pr{\frac{\varphi(q)}{dq^\frac12}+O(d(q))}^2\\
&=2\sum_{\substack{d|q, \\d\leq \sqrt q}}\frac1d\sum_{\substack{(n,q)=1,\\n\leq \sqrt{q/d^2}}}\frac1n+O\pr{\frac{d(q)^2q^\frac12}{\varphi(q)}}-\frac{\varphi(q)}q\sum_{\substack{d|q, \\d\leq \sqrt q}}\frac1d+O\pr{\frac{d(q)^2}{q^\frac12}+\frac{d(q)^3q^{\frac12}}{\varphi(q)}}\\
&=\sum_{\substack{d|q, \\d\leq \sqrt q}}\frac1d\pr{\frac{\varphi(q)}{q}(\log (q/d^2)+2\gamma)-2\sum_{p|q}\frac{\ln p}{q}\varphi(q/p)+O(d(q)d/\sqrt q)}+\\
&-\frac{\varphi(q)}q\sum_{\substack{d|q, \\d\leq \sqrt q}}\frac1d+O\pr{q^{-\frac12+\eps}}\\
&=\frac{\varphi(q)}{q}(\log q+2\gamma-1)\sum_{\substack{d|q, \\d\leq \sqrt q}}\frac1d-2\frac{\varphi(q)}{q} \sum_{\substack{d|q, \\d\leq \sqrt q}}\frac{\log d}d-2\sum_{p|q}\frac{\ln p}{q}\varphi(q/p)\sum_{\substack{d|q, \\d\leq \sqrt q}}\frac1d+O\pr{q^{-\frac12+\eps}}\\
&=\frac{\varphi(q)\sigma_{-1}(q)}{q}(\log q+2\gamma-1)-2\frac{\varphi(q)}{q} \sum_{\substack{d|q}}\frac{\log d}d-2{\sigma_{-1}(q)}\sum_{p|q}\frac{\ln p}{p}\varphi(q/p)+O\pr{q^{-\frac12+\eps}}\\
&=\frac{\varphi(q)\sigma_{-1}(q)}{q}\pr{\log q+O\pr{\log\log q}}\\
}
since
\est{
\sum_{\substack{r\leq X,\\(r,q)=1}}1&=\frac{q}{\varphi(q)}X+O(d(q)),\\
\sum_{\substack{r\leq X,\\(r,q)=1}}\frac1r&=\sum_{d|q}\frac{\mu(d)}d\sum_{\substack{r\leq X/d}}\frac1r=\sum_{d|q}\frac{\mu(d)}d\pr{\ln(X/d)+\gamma+O\pr{d/X}}\\
&=\frac{\varphi(q)}{q}(\log X+\gamma)-\sum_{p|q}\frac{\ln p}{q}\varphi(q/p)+O(d(q)/X)
}
and
\est{
\sum_{d| q}\frac{\mu(d)}{d}\ln d=-\sum_{p|q}\frac{\ln p}{q}\varphi(q/p)
}
Also,
\est{
\sum_{p|q}\frac{\log p}{q}\varphi\pr{q/p}\ll \frac{\varphi(q)}{q}\sum_{p||q}\frac{\log p}{p-1}+\frac{\varphi(q)}{q}\sum_{p^2|q}\frac{\log p}{p} \ll\frac{\varphi(q)}{q}\log\log q
}
and
\est{
\sum_{\substack{d|q}}\frac{\log d}d&=\sum_{\substack{p^k||q}}\log p\pr{\frac1p+\frac2p+\cdots+\frac{k}{p^k}}\sigma_{-1}(n/p)\\
&\ll \sigma_{-1}(n) \sum_{\substack{p|q}}\frac{\log p}p \ll \sigma_{-1}(n)\log\log q
}
\end{proof}
}

\comment
{
\est{
\frac1{\varphi(q)}\sum_{\substack{0<a<q,\\ (a,q)=1}}S(a/q)&=\frac1{\varphi(q)}\sum_{\substack{0<a<q,\\ (a,q)=1}}\sum_{\substack{am\equiv n\mod q,\\mn\leq q,\ n,m>0}}1=\frac1{\varphi(q)}\sum_{\substack{d|q, \\d\leq \sqrt q}}d\sum_{\substack{(n,q)=(m,q)=d,\\mn\leq q}}1\\
&=2\frac1{\varphi(q)}\sum_{\substack{d|q, \\d\leq \sqrt q}}d\sum_{\substack{(n,q)=1,\\n\leq  \sqrt{q/d^2}}}\sum_{\substack{(m,q)=1,\\m< q/nd^2}}1-\frac1{\varphi(q)}\sum_{\substack{d|q, \\d\leq \sqrt q}}d\pr{\sum_{\substack{(n,q)=1,\\n\leq  \sqrt{q/d^2}}}1}^2\\
&=2\frac1{\varphi(q)}\sum_{\substack{d|q, \\d\leq \sqrt q}}d\sum_{\substack{(n,q)=1,\\n\leq \sqrt{q/d^2}}}\pr{\frac{\varphi(q)}{d^2n}+O(d(q))}-\frac1{\varphi(q)}\sum_{\substack{d|q, \\d\leq \sqrt q}}d\pr{\frac{\varphi(q)}{dq^\frac12}+O(d(q))}^2\\
&=2\sum_{\substack{d|q, \\d\leq \sqrt q}}\frac1d\sum_{\substack{(n,q)=1,\\n\leq \sqrt{q/d^2}}}\frac1n+O\pr{\frac{d(q)^2q^\frac12}{\varphi(q)}}-\frac{\varphi(q)}q\sum_{\substack{d|q, \\d\leq \sqrt q}}\frac1d+O\pr{\frac{d(q)^2}{q^\frac12}+\frac{d(q)^3q^{\frac12}}{\varphi(q)}}\\
&=\sum_{\substack{d|q, \\d\leq \sqrt q}}\frac1d\pr{\frac{\varphi(q)}{q}(\log (q/d^2)+2\gamma)-2\sum_{p|q}\frac{\ln p}{q}\varphi(q/p)+O(d(q)d/\sqrt q)}+\\
&-\frac{\varphi(q)}q\sum_{\substack{d|q, \\d\leq \sqrt q}}\frac1d+O\pr{q^{-\frac12+\eps}}\\
&=\frac{\varphi(q)}{q}(\log q+2\gamma-1)\sum_{\substack{d|q, \\d\leq \sqrt q}}\frac1d-2\frac{\varphi(q)}{q} \sum_{\substack{d|q, \\d\leq \sqrt q}}\frac{\log d}d-2\sum_{p|q}\frac{\ln p}{q}\varphi(q/p)\sum_{\substack{d|q, \\d\leq \sqrt q}}\frac1d+O\pr{q^{-\frac12+\eps}}\\
&=\frac{\varphi(q)\sigma_{-1}(q)}{q}(\log q+2\gamma-1)-2\frac{\varphi(q)}{q} \sum_{\substack{d|q}}\frac{\log d}d-2{\sigma_{-1}(q)}\sum_{p|q}\frac{\ln p}{p}\varphi(q/p)+O\pr{q^{-\frac12+\eps}}\\
&=\frac{\varphi(q)\sigma_{-1}(q)}{q}\pr{\log q+O\pr{\log\log q}}\\
}
since
\est{
\sum_{\substack{r\leq X,\\(r,q)=1}}1&=\frac{q}{\varphi(q)}X+O(d(q)),\\
\sum_{\substack{r\leq X,\\(r,q)=1}}\frac1r&=\sum_{d|q}\frac{\mu(d)}d\sum_{\substack{r\leq X/d}}\frac1r=\sum_{d|q}\frac{\mu(d)}d\pr{\ln(X/d)+\gamma+O\pr{d/X}}\\
&=\frac{\varphi(q)}{q}(\log X+\gamma)-\sum_{p|q}\frac{\ln p}{q}\varphi(q/p)+O(d(q)/X)
}
and
\est{
\sum_{d| q}\frac{\mu(d)}{d}\ln d=-\sum_{p|q}\frac{\ln p}{q}\varphi(q/p)
}
Also,
\est{
\sum_{p|q}\frac{\log p}{q}\varphi\pr{q/p}\ll \frac{\varphi(q)}{q}\sum_{p||q}\frac{\log p}{p-1}+\frac{\varphi(q)}{q}\sum_{p^2|q}\frac{\log p}{p} \ll\frac{\varphi(q)}{q}\log\log q
}
and
\est{
\sum_{\substack{d|q}}\frac{\log d}d&=\sum_{\substack{p^k||q}}\log p\pr{\frac1p+\frac2p+\cdots+\frac{k}{p^k}}\sigma_{-1}(n/p)\\
&\ll \sigma_{-1}(n) \sum_{\substack{p|q}}\frac{\log p}p \ll \sigma_{-1}(n)\log\log q
}
}

\section*{Acknowledgement}
This note was mostly written while the author was a postdoctoral fellow at the Centre de recherches math\'ematiques in Montr\'eal.

\appendix

\addresses
\end{document}